\newtheorem{theorem}{Theorem}
\newtheorem{remark}{Remark}
\def \beq{ \begin{equation}}
\def \eeq{\end{equation}}
\title{Equal masses Eulerian relative equilibria on a rotating meridian of $\mathbb{S}^2$}
\date{}
\begin{document}
\maketitle
\author{
\begin{center}
	{Toshiaki~Fujiwara$^1$, Ernesto P\'{e}rez-Chavela$^2$}\\	
		\bigskip
	   $^1$College of Liberal Arts and Sciences, Kitasato University, Japan\\
	   fujiwara@kitasato-u.ac.jp\\
	    $^2$Department of Mathematics, ITAM, M\'exico\\
	    ernesto.perez@itam.mx\\
	\end{center}
}
	
\begin{abstract}
Relative equilibria on a rotating meridian on $\mathbb{S}^2$
in equal-mass three-body problem 
under the cotangent potential are  determined.
We show the existence of scalene and isosceles 
relative equilibria.

Almost all isosceles triangles, including equilateral, can form a relative equilibrium,
except for the two equal arc angles $\theta = \pi/2$.
For $\theta\in (0,2\pi/3)\setminus \{\pi/2\}$,
the mid mass must be on
the rotation axis, in our case, at the north or south pole of $\mathbb{S}^2$.

For $\theta\in (2\pi/3,\pi)$, the mid mass must be on the equator.
For $\theta=2\pi/3$, we obtain the equilateral triangle, 
where the
position of the masses is arbitrary.

When the largest arc angle $a_\ell$ is in $a_\ell\in (\pi/2,a_c)$, with $a_c=1.8124...$,
two scalene configurations 
exist for given $a_\ell$.
\end{abstract}

{\bf Keywords} Relative equilibria, Euler configurations, cotangent potential.


\section{Introduction}
Relative equilibria are the simplest solutions in the three body problem ($n$--body in general), where the masses move uniformly in a circular motion, as if they formed a rigid body. In other words where the forces produced by the rotation are in perfect balance with the attractive forces among the masses. For a nice overview about relative equilibria in Euclidean spaces see \cite{Wintner}.

In a recent work, we develop
a systematic method
to study relative equilibria on $\mathbb{S}^2$
in the three-body problem \cite{arxiv1, arxiv2}.
This method is applicable to investigate the Euler configurations
(relative equilibria where the three bodies are on a geodesic)
and the extended Lagrange configurations (relative equilibria where
the three bodies are not on a geodesic) with general masses.
In \cite{arxiv1}, we totally solved the case when the three bodies are on the equator. Additionally we explained why is not possible to have Eulerian relative equilibria on a geodesic other than the equator or a meridian. For this reason, here we concentrate on the analysis of relative equilibria on a rotating meridian for the equal masses case. As you will notice, this is not a trivial case, and allow us to clarify our method and verify that everything works well.

The relative equilibria on a  rotating meridian 
are the motions 
where the angle from the north pole is fixed, that is
$\theta_k(t)=\theta_k(0)$
and their longitude $\phi_k$ on a 
rotating meridian
with constant angular velocity $\omega$ is,
$\phi_k(t)=\omega t$ for $k=1,2,3$.
See Figure \ref{figThreeTypicalREOnARotatingMeridian}.
\begin{figure}
   \centering
   \includegraphics[width=4.3cm]{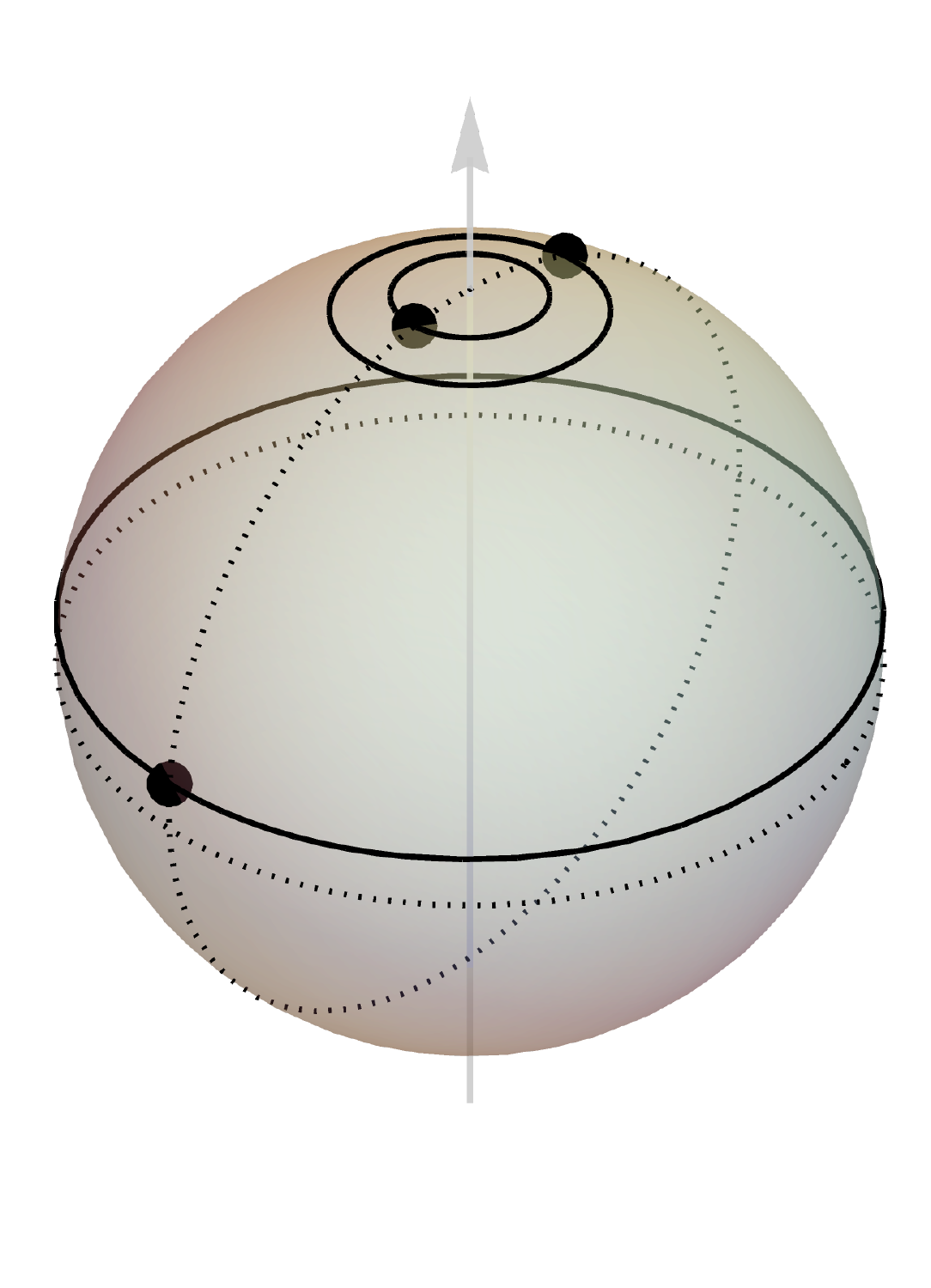}%
   \includegraphics[width=4.3cm]{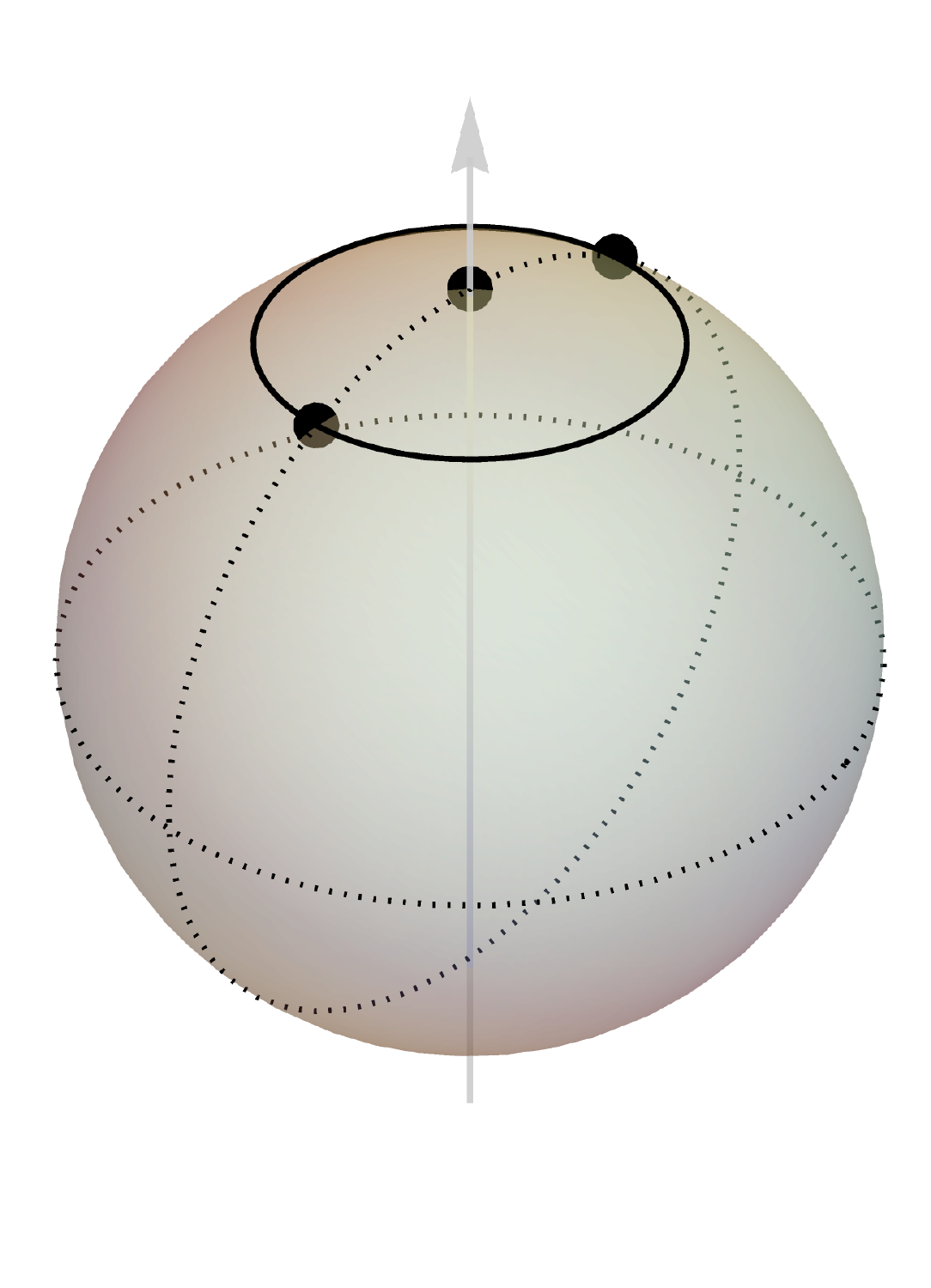}%
   \includegraphics[width=4.3cm]{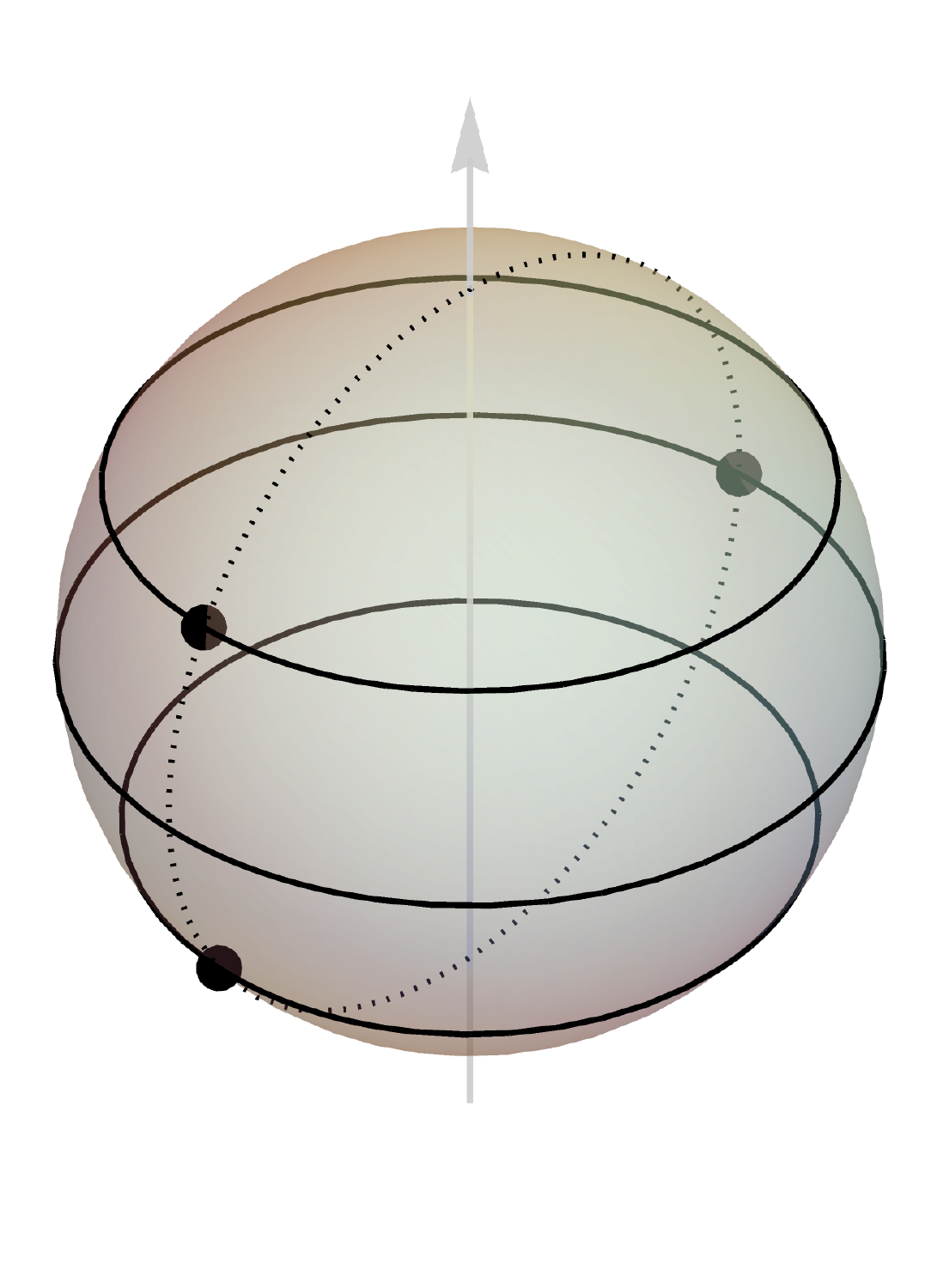}%
   \caption{Three typical relative equilibria on a rotating meridian
   for equal-mass three-body problem on $\mathbb{S}^2$ are shown.
   Three bodies are rotating around the $z$-axis
   as if it were a rigid body.
   Three black balls represent three masses.
   Solid circles represent the orbit of masses.
   Two dotted circles and the arrow represent the rotating meridian, the Equator,
   and the the $z$-axis, respectively.
   Left: A scalene configuration, where three arc angles between bodies are different.
   The largest arc angle $a_\ell$ satisfies $\pi/2<a_\ell<a_c=1.8124...$.
   Middle: An isosceles configuration with two equal arc angles $\theta$ are smaller than $2\pi/3$
   and $\theta\ne \pi/2$, 
   in this case the mid mass must be on one of the pole. 
   When $\theta=\pi/2$, the two masses are on the antipodal point of each other,
   where the equations of motion are not defined.
   Right: An isosceles configuration with $2\pi/3<\theta < \pi$, 
   in this case the mid mass must rotates on the equator.
   }
   \label{figThreeTypicalREOnARotatingMeridian}
\end{figure}

We also restricted our analysis to the cotangent potential given by

\begin{equation}\label{cotangent pot}
U=\frac{\cos\theta_{ij}}{\sqrt{1-\cos^2(\theta_{ij})}},
\end{equation}
where $\theta_{ij}=\theta_i-\theta_j$,
and $\theta_k$, $k=1,2,3$ is the angle from the north pole,
$-\pi<\theta_k<\pi$.
The cotangent potential is the potential used in the analysis of the $n$--body problem, when the masses move on a surface of constant positive curvature. So, the results obtained in this article can be presented as new families of relative equilibria for the positive curved $3$--body problem \cite{Diacu-EPC1, Diacu1, Diacu4, EPC1, zhu}.

Without loss of generality, along this paper
we take $m_k=1$ and the radius of $\mathbb{S}^2$ is $1$.
Since
\begin{equation}
\frac{dU}{d\theta_{ij}}=-\frac{\sin\theta_{ij}}{|\sin\theta_{ij}|^3}<0
\end{equation}
for $0<\theta_{ij}<\pi$, this potential produces attractive force among bodies.

It is important to distinguish {\it the shape} and the {\it configuration.}
The shape is described by two mutual angles,
for example $\theta_{21}$ and $\theta_{31}$.
On the other hand, the configuration is described by the three angles $\theta_k$.
The map from a configuration to the shape is trivial.
However, the map from a shape to the configuration needs 
additional information.
This information is provided by the angular momentum $c_x=c_y=0$,
which is reduced to one equation 
\begin{equation}
\sum_k \sin(2\theta_k)=0
\mbox{ if }\omega\ne 0
\end{equation}
for the system on a rotating meridian.
In Section \ref{secTranslation},
we review the formula (hereafter referred to as ``translation formula'') for the map
from a shape to the corresponding configuration.

Utilising this formula,
the equations of motion are converted to the conditions for the shape.
If a shape satisfies this condition  we call it a ``rigid rotator'',
because it rotates as if it were a rigid body.
We review this step in section \ref{conditionForRotator}. 
Once we find a ``rigid rotator'',
it is translated to the corresponding configuration (relative equilibrium)
by the translation formula. 

In the same section (section \ref{conditionForRotator}),
we will show that the ``rigid rotators'' (shape of the relative equilibria)
for equal masses case are scalene or isosceles triangles (including equilateral).
As far as we know, this is the first time that Eulerian scalene relative equilibria, for  equal masses are introduced.
Using a different approach, S. Zhu proved the existence of acute and obtuse triangles on a rotating meridian, in particular scalene triangles, but with not all  equal masses \cite{zhu}. For the case of isosceles triangles, we have recovered Zhu's results by using our method .

Let $a$ be one angle between a pair of positions (for instance $\theta_{21}$).
For given $0<a<a_c=1.8124...$,
there are two scalene relative equilibria.
See  Figure \ref{figThreeTypicalREOnARotatingMeridian}.
The scalene relative equilibria will be treated in Section~\ref{secScalane}.
The exact value of $\cos(a_c)$ will be shown there.
It will be also shown that the largest arc angles $a_\ell$ between bodies
must be in the interval
$ (\pi/2,a_c)$ 
in order to have a scalene equilibria.
Two scalene equilibria are shown in the Figure \ref{figEqualMassesAeqPiDiv6RelativeEquilibria},
where $a=\theta_{21}=\pi/6$.
Some exact values for the scalene equilibria with $\cos(\theta_{21})=-1/8$
are  shown in the appendix \ref{secExactValues}.
\begin{figure}
   \centering
   \includegraphics[width=12cm]{./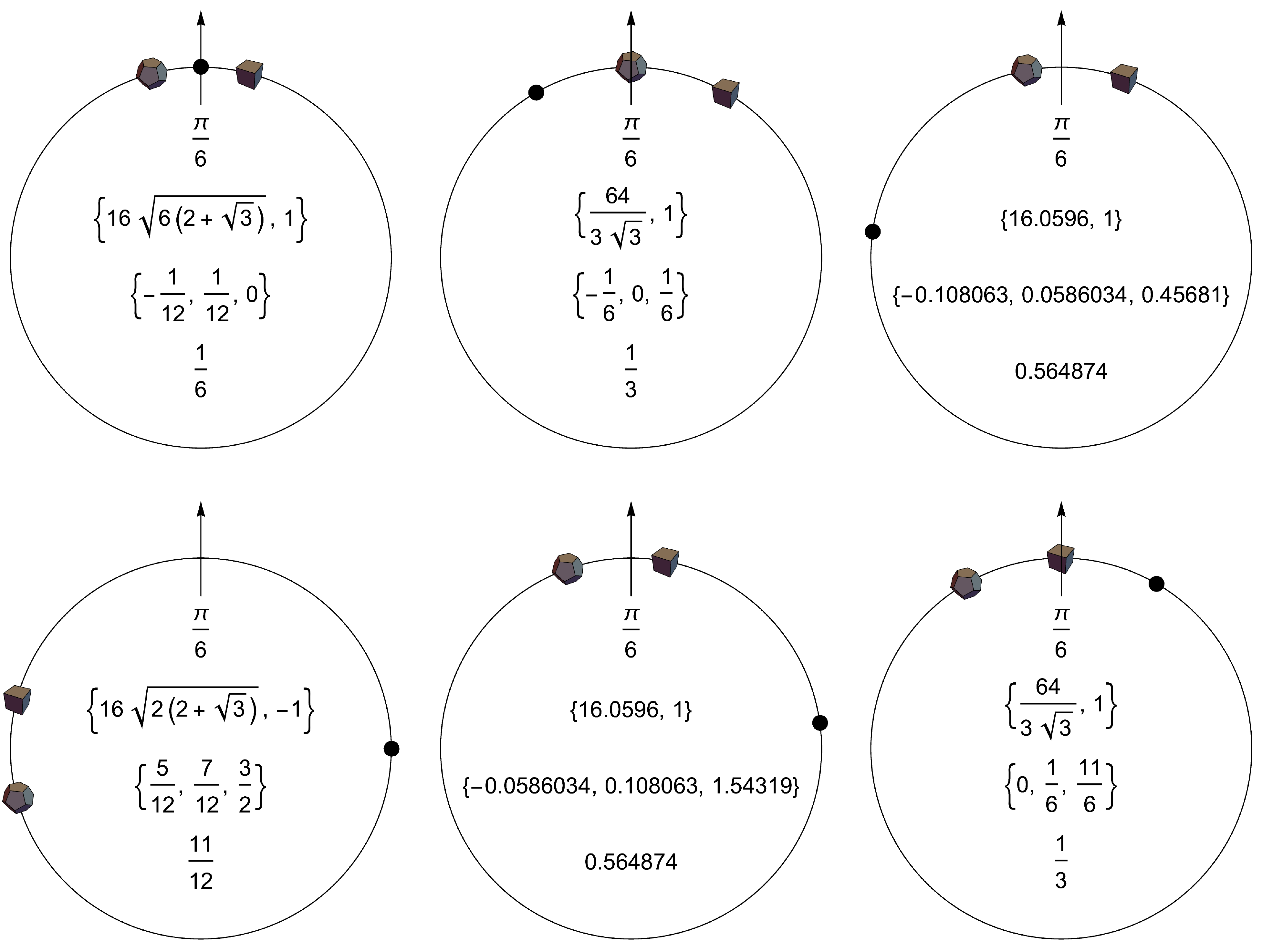}
   \caption{Six relative equilibria for
   equal masses $a=\theta_{21}=\pi/6 < a_c$.
   The cube, dodecahedron, and ball represent
   $m_1$, $m_2$, and $m_3$ respectively.
   The arrow represents the $z$-axis.
   We will continue using this convention 
   for the masses and $z$-axis in the following figures.
   As we can see, there are
   two scalene configurations
   (upper right and lower middle),
   and four isosceles configurations.
   In each image, the lines represent respectively
   $a=\pi/6$,
   $\{\omega^2, s\}$,
   $\{\theta_1/\pi,\theta_2/\pi,\theta_3/\pi\}$,
   and the largest arc angle $a_\ell/\pi$.
   The two scalene triangles are congruent
   with exchanging the masses $m_1$ and  $m_2$
   and $a_\ell/\pi=0.5648...>1/2$.}
   \label{figEqualMassesAeqPiDiv6RelativeEquilibria}
\end{figure}

For any given $a \in (0,\pi)\backslash\{\pi/2\}$,
there always exist four isosceles relative equilibria.
Namely, almost all isosceles triangle can form a relative equilibrium.
The isosceles relative equilibria will be treated in Section \ref{secIsosceles}.
Let $0<\theta<\pi$ be the equal angle of the isosceles triangle,
and the mass $3$ is in the mid point,
that is $\theta=\theta_2-\theta_3=\theta_3-\theta_1$.
We will show in Section \ref{secIsosceles} that
$\theta_3=0 \mod (\pi)$ for
$\theta\in (0,2\pi/3)\backslash\{\pi/2\}$,
and $\theta_3=\pi/2 \mod (\pi)$ for $\theta\in (2\pi/3,\pi)$. 
See Figure \ref{figThreeTypicalREOnARotatingMeridian}. 
The above result was proved previously by S. Zhu by using different techniques (see \cite{zhu} for more details).
For $\theta=2\pi/3$, the shape of the configuration is equilateral,
$\omega=0$, and the angles $\theta_k$ are undetermined.
Only the mutual angles are determined to be $2\pi/3$.

Finally Section \ref{secFinalRemarks} is devoted to the final remarks.


\section{Correspondence between a configuration and a shape}
\label{secTranslation}
In this section, we give
the equations of motion for the equilibria on a rotating meridian
and the translation formula which translates the shape to the configuration.

The equations of motion for the equilibria on a rotating meridian
are given by
\begin{equation}
\label{equationsOfMotion}
\frac{\omega^2}{2}\sin(2\theta_k)
=\sum_{i\ne k}\frac{\sin(\theta_k-\theta_i)}{|\sin(\theta_k-\theta_i)|^3},
\end{equation}
where $k$ and $i \in \{1,2,3\}$.
The sum of the equations for $k=1,2,3$ yields
\begin{equation}
\label{CxCy}
\omega^2 \sum_k \sin(2\theta_k)=0.
\end{equation}
This is a first integral,
which corresponds to the angular momentum $c_x=c_y=0$.
It is clear that the condition (\ref{CxCy}) is a necessary condition for $\theta_k$
to satisfy the equations of motion.

Using this equation,
we can find the translation formula.
Let $\theta_{ij}=\theta_i-\theta_j$.
Then the equation (\ref{CxCy}) for $\omega^2\ne 0$ is
\begin{equation}
\label{conditionForTheta1}
\begin{split}
0
&=\sin(2\theta_1)+\sin(2(\theta_1+\theta_{21}))+\sin(2(\theta_1+\theta_{31}))\\
&=\sin(2\theta_1)\Big(1+\cos(2\theta_{21})+\cos(2\theta_{31})\Big)
	+\cos(2\theta_1)\Big(\sin(2\theta_{21})+\sin(2\theta_{31})\Big)\\
&=A\sin(2\theta_1+2\alpha).
\end{split}
\end{equation}
Where,
\begin{equation}
\begin{split}
A
&=\left(
	\Big(1+\cos(2\theta_{21})+\cos(2\theta_{31})\Big)^2
	+\Big(\sin(2\theta_{21})+\sin(2\theta_{31})\Big)^2
	\right)^{1/2}\\
&=\left(3+2\Big(
		\cos(2\theta_{12})+\cos(2\theta_{23})+\cos(2\theta_{31})
		\Big)
	\right)^{1/2},
\end{split}
\end{equation}
and 
\begin{equation}
\begin{split}
\cos(2\alpha)
&=A^{-1}\Big(1+\cos(2\theta_{21})+\cos(2\theta_{31})\Big),\\
\sin(2\alpha)
&=A^{-1}\Big(\sin(2\theta_{21})+\sin(2\theta_{31})\Big),
\end{split}
\end{equation}
if $A\ne 0$.
The case of $A=0$ will be considered in the last of this section.
Let us proceed assuming $A\ne 0$.
The solution of (\ref{conditionForTheta1}) is
$2\theta_1=-2\alpha$ or $-2\alpha+\pi$.
Namely,
\begin{equation}
\label{tranlationFormula}
\begin{split}
\cos(2\theta_1)
&=sA^{-1}\Big(1+\cos(2\theta_{12})+\cos(2\theta_{13})\Big),\\
\sin(2\theta_1)
&=sA^{-1}\Big(\sin(2\theta_{12})+\sin(2\theta_{13})\Big).
\end{split}
\end{equation}
Where $s=\pm 1$.

Although there are ambiguity for $\theta_1$ modulo $\pi$,
the configuration with $\theta_1$ and $\theta_1+\pi$
is just  
a reflection on the equator (upside-down  or north-south each other).

This equation determines the configuration variable $\theta_1$, 
through 
the two shape variables $\theta_{21}$ and $\theta_{31}$.
The other angles are determined by
$\theta_2=\theta_1+\theta_{21}$ and $\theta_3=\theta_1+\theta_{31}$.
These are the translation formulas.

For a later use, let us describe the equations for the other angles
that are derived by (\ref{tranlationFormula}).
\begin{equation}
\label{tranlationFormula2}
\begin{split}
\sin(2\theta_2)&=sA^{-1}\Big(\sin(2\theta_{21})+\sin(2\theta_{23})\Big),\\
\sin(2\theta_3)&=sA^{-1}\Big(\sin(2\theta_{31})+\sin(2\theta_{32})\Big),\\
\cos(2\theta_2)&=sA^{-1}\Big(\cos(2\theta_{21})+1+\cos(2\theta_{23})\Big),\\
\cos(2\theta_3)&=sA^{-1}\Big(\cos(2\theta_{31})+\cos(2\theta_{32})+1\Big).
\end{split}
\end{equation}

Now, let us consider the case of $A=0$.
This will happen if 
$$1+\cos(2\theta_{21})+\cos(2\theta_{31})
=\sin(2\theta_{21})+\sin(2\theta_{31})
=0.$$
The solution is
\begin{equation}
\begin{split}
\cos(2\theta_{21})&=\cos(2\theta_{31})=-\frac{1}{2},\\
\sin(2\theta_{21})&=-\sin(2\theta_{31}).
\end{split}
\end{equation}

Two shapes satisfy this equation.
To make the description clear,
let us write $a=\theta_{21}$ and $x=\theta_{31}$.
We can restrict $0<a<\pi$ and $-\pi<x<\pi$ without loss of generality.
From now on, 
$a$ and $x$ represent the same angle and take the same range. 

One shape is equilateral triangle, $a=2\pi/3$ and $x=-2\pi/3$.
Then the right hand side of the equation (\ref{equationsOfMotion})
for $k=1,2,3$ is zero.
Therefore, the solution is $\omega=0$,
what is so called a fixed point.
When $\omega=0$, the angles $\theta_k$ are not determined.
Only the difference of the angles $\theta_{ij}$ has meaning.

Another solution is an isosceles triangle with
two equal angles equal to $\pi/3$.
The corresponding $a$ and $x$ are
$(a,x)=(\pi/3,2\pi/3)$, $(\pi/3,-\pi/3)$, and $(2\pi/3,\pi/3)$.
Actually, we can convince us that
$\sin(2\theta_1)+\sin(2(\theta_1+\pi/3))+\sin(2(\theta_1+2\pi/3))
=\sin(2\theta_1)+\sin(2(\theta_1+\pi/3))+\sin(2(\theta_1-\pi/3))
=0$ is an identity for all $\theta_1$.
However, the equations (\ref{equationsOfMotion})  for $(a,x)=(2\pi/3,\pi/3)$ are
\begin{equation}
\begin{split}
-\omega^2\sin(2\theta_1)&=\omega^2\sin(2\theta_2)=8/3,\\
\omega^2\sin(2\theta_3)&=0.
\end{split}
\end{equation}
From the last line, we get $2\theta_3=0$ or $\pi$.
Then, from the first line,
we get $\theta_3=0$ and $\theta_2=-\theta_1=\pi/3$, 
$\omega^2=16/(3\sqrt{3})$.
Thus $\theta_k$ and $\omega^2$ are determined by the equations of motion
for this case.

This example clearly shows that
$A=0$ is not always the fixed point 
(relative equilibrium with $\omega^2=0$).
The equations of motion (\ref{equationsOfMotion}) determines
whether the shape is a fixed point or not.


\section{Condition for a shape to be a rigid rotator}
\label{conditionForRotator}
In this section, assuming that $A\ne 0$,
we rewrite the equations of motion to obtain the conditions for a shape.
If a shape satisfies this condition, the shape can form a relative equilibrium.

Using the translation formulae (\ref{tranlationFormula}--\ref{tranlationFormula2}),
the equations of motion (\ref{equationsOfMotion}) can be written as
\begin{equation}
\label{condition1}
\frac{s\omega^2}{2A}
	\Big(\sin(2\theta_{12})+\sin(2\theta_{13})\Big)
=\frac{\sin(\theta_{12})}{|\sin(\theta_{12})|^3}
	+\frac{\sin(\theta_{13})}{|\sin(\theta_{13})|^3},
\end{equation}
and similar equations.
Now let 
\begin{equation}
G_{ij}=\sin(2\theta_{ji}),\qquad F_{ij}=\frac{\sin\theta_{ji}}{|\sin\theta_{ji}|^3}.
\end{equation}
Then the equations of motion (\ref{equationsOfMotion}) are equivalent to
 \begin{equation}
 \label{theConditions}
 \begin{split}
 \frac{s\omega^2}{2A}(G_{12}-G_{23})&=F_{12}-F_{23},\\
\frac{s\omega^2}{2A}(G_{23}-G_{31})&=F_{23}-F_{31},\\
\frac{s\omega^2}{2A}(G_{31}-G_{12})&=F_{31}-F_{12},
 \end{split}
 \end{equation}
 if $A\ne 0$.
 Only two of the above conditions
 are independent.
In this paper, the first and the last one will be used. 
We have the following result.

\begin{theorem}
[Condition for a shape]
\label{propConditionForShape}
If $A\ne0$,
\begin{equation}
f
=\left|\begin{array}{cc}
G_{12}-G_{23} & G_{31}-G_{12} \\
F_{12}-F_{23} & F_{31}-F_{12}
\end{array}\right|
=0
\end{equation}
is a necessary and sufficient condition for a shape to satisfy the equations of motion.
\end{theorem}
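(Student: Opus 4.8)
The plan is to build on the reduction already carried out in arriving at (\ref{theConditions}). Under the standing assumption $A\neq0$ (so $A>0$, $A$ being a principal square root), the translation formulae (\ref{tranlationFormula}) turn the equations of motion into the three linear relations (\ref{theConditions}) in the single scalar $\lambda:=s\omega^2/(2A)$, and those three relations sum to zero, so the first and the last already imply the middle one. Abbreviating $g_1=G_{12}-G_{23}$, $g_2=G_{31}-G_{12}$, $h_1=F_{12}-F_{23}$, $h_2=F_{31}-F_{12}$, a shape is a rigid rotator if and only if there is a real $\lambda$ with $\lambda g_1=h_1$ and $\lambda g_2=h_2$. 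Conversely, every real $\lambda$ is realised by an admissible pair $(s,\omega^2)$: take $\omega^2=2A|\lambda|\ge0$ and $s=\operatorname{sign}\lambda$ (with $\lambda=0$ the fixed point $\omega=0$). Since $f=g_1h_2-g_2h_1$, the theorem is exactly the statement that the linear system $\lambda(g_1,g_2)=(h_1,h_2)$ is consistent precisely when $f=0$.

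Necessity is then immediate: if the shape is a rigid rotator there is a $\lambda$ with $\lambda g_1=h_1$ and $\lambda g_2=h_2$, and eliminating $\lambda$ gives $f=g_1h_2-g_2h_1=\lambda g_1g_2-\lambda g_2g_1=0$. For sufficiency, assume $f=0$, that is, $(g_1,g_2)$ and $(h_1,h_2)$ are linearly dependent. If $(g_1,g_2)\neq(0,0)$, then $(h_1,h_2)=\lambda(g_1,g_2)$ for a unique scalar $\lambda$; this is exactly the first and third relations of (\ref{theConditions}), the middle relation follows from the sum-zero identity, and by the previous paragraph the shape is a rigid rotator. Hence the whole argument reduces to excluding the degenerate situation $(g_1,g_2)=(0,0)$, i.e. $G_{12}=G_{23}=G_{31}$, for a genuine (collision- and antipode-free) shape with $A\neq0$.

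I expect this exclusion to be the only real obstacle, and I would settle it by a short case analysis. The equality $G_{12}=G_{23}=G_{31}$ says $\sin(2\theta_{21})=\sin(2\theta_{32})=\sin(2\theta_{13})$, while automatically $\theta_{21}+\theta_{32}+\theta_{13}=0$. Writing the common value as $\sin\psi$ with $\psi\in[-\pi/2,\pi/2]$, each $\theta_{ij}$ must lie in $\{\psi/2+k\pi\}$ or in $\{\pi/2-\psi/2+k\pi\}$; distributing the three angles among these two families in the four possible ways and imposing $\theta_{21}+\theta_{32}+\theta_{13}=0$ leaves only two possibilities, namely $\psi=0$, in which case every $\theta_{ij}\in\{0,\pm\pi\}$ so the shape has a collision or an antipodal pair, or the case in which all three $\theta_{ij}$ lie in one coset $\{\pi/3+k\pi\}$ or $\{2\pi/3+k\pi\}$, giving $\cos(2\theta_{12})=\cos(2\theta_{23})=\cos(2\theta_{31})=-1/2$ and hence $A=0$. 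Both are excluded by hypothesis, so $(g_1,g_2)\neq(0,0)$ and the proof is complete. An equivalent and perhaps tidier way to organise the linear algebra is to note that the $3\times2$ matrix with rows $(G_{ij}-G_{jk},\,F_{ij}-F_{jk})$ has rows summing to zero, so it has rank at most one iff any single one of its $2\times2$ minors vanishes; given that its first column is nonzero (the point just established), solvability of the system is then equivalent to that rank being at most one, hence to $f=0$.
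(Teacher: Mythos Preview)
Your argument is correct and follows the same overall strategy as the paper: both reduce the equations of motion to the two independent linear relations (\ref{theConditions}) in the scalar $\lambda=s\omega^2/(2A)$ and read $f=0$ as the compatibility condition for that system. The one substantive difference lies in the treatment of the degenerate case. The paper takes the subcase ``all four matrix entries vanish'' and, using both the $G$- and the $F$-equalities, shows that this forces the equilateral shape (which it then records as satisfying the equations of motion with $\omega=0$); in the remaining case it defines the ratio from whichever entry happens to be nonzero, including possibly an $F$-entry. You instead show, using only the $G$-relations together with $\theta_{21}+\theta_{32}+\theta_{13}=0$, that the first column $(g_1,g_2)$ is already nonzero whenever $A\neq 0$ and the shape has no collisions or antipodal pairs. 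Your route is a little cleaner: it never needs the $F$-equations for the degenerate analysis, it makes explicit that the equilateral shape has $A=0$ and hence lies outside the hypothesis, and it forestalls the awkward possibility (latent in the paper's ``define $2A/(s\omega^2)$ from an $F$-entry'' alternative) of obtaining the value $0$, i.e.\ $\omega^2=\infty$. One minor wording point: in your $\psi=0$ case not \emph{every} $\theta_{ij}$ need lie in $\{0,\pm\pi\}$ (in the $n=1$ subcase only one of them does), but that single collision or antipodal pair already contradicts admissibility, so your conclusion stands.
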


\begin{proof}
We will show that $f=0$ is equivalent to the equations (\ref{theConditions}).

If equations (\ref{theConditions}) are satisfied,
\begin{equation}
f=\frac{s\omega^2}{2A}
\left|\begin{array}{cc}
G_{12}-G_{23} & G_{31}-G_{12} \\
G_{12}-G_{23} & G_{31}-G_{12}
\end{array}\right|
=0
\end{equation}
is obvious.

Inversely, if $f=0$, there are two cases.

The first case is when all elements of the matrix are zero.
For this case, the equations of motion (\ref{theConditions}) are trivially satisfied
and $\omega$ is undetermined.
We can show that this case only happens when the shape is equilateral.
Because,
we get $\sin(2a)=\sin(2(a-x))=-\sin(2x)$ from the equations for $G$,
and $\sin a =\sin(a-x))=-\sin x$ from the equations for $F$.
This yields $\cos a=\cos(a-x)=\cos x=-1/2$.
The solution in $0<a<\pi$, and $-\pi<x<\pi$ and $a\ne x$ is
$a=-x=2\pi/3$, namely the shape is equilateral.
Then, the original equations of motion (\ref{equationsOfMotion}) are satisfied by 
$\omega=0$ and $\theta_k$ are undetermined.

The second case 
is when at least one of the elements of the matrix 
is not zero.
For example, let be $G_{12}-G_{23}\ne 0$.
Then we can define 
\begin{equation}
\frac{s\omega^2}{2A}=\frac{F_{12}-F_{23}}{G_{12}-G_{23}}.
\end{equation}
Then,
$f=0$ yields $s\omega^2/(2A)(G_{31}-G_{12})=F_{31}-F_{12}$.
Thus the equations  (\ref{theConditions}) are satisfied.
Similarly, if $F_{12}-F_{23}\ne 0$
then we can define
\begin{equation}
\frac{2A}{s\omega^2}=\frac{G_{12}-G_{23}}{F_{12}-F_{23}}.
\end{equation}
In this case $f=0$ yields
$G_{31}-G_{12}=(2A)/(s\omega^2)(F_{31}-F_{12})$ and then
 the equations  (\ref{theConditions}) are satisfied.
\end{proof}

The explicit form of $f=0$ is given by
\begin{equation}
\label{defOfFandG}
\begin{split}
f=&\frac{g}{(\sin x |\sin x|)(\sin a |\sin a|)(\sin(x-a) |\sin(x-a)|)}=0, \quad \text{where}\\
g=
&\sin(x)|\sin(x)|\Big(\sin(2x)+\sin(2a)\Big)
	\Big(\sin(x-a)|\sin(x-a)|-\sin a |\sin a|\Big)\\
&-\sin(x-a)|\sin(x-a)|\Big(\sin(2a)-\sin(2(x-a))\Big)
	\Big(\sin a |\sin a|+\sin x |\sin x|\Big).
\end{split}
\end{equation}

\begin{figure}
   \centering
   \includegraphics[width=13cm]{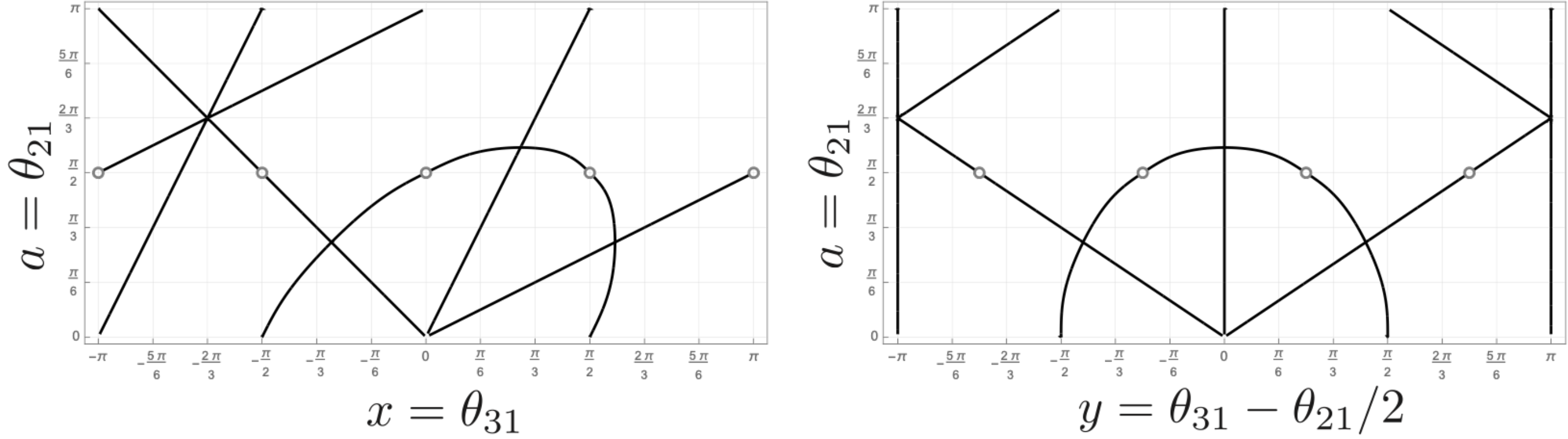}
   \caption{The contour for $f=0$.
   The vertical axis represents $a=\theta_{21}$.
   The horizontal axis is $x=\theta_{31}$ (left), and 
   $y=\theta_{31}-\theta_{21}/2$ (right).
   The vertical lines $y=\pm \pi$ represent the same line.
   The curve and the straight lines represent
   scalene and isosceles rigid rotators respectively.
   The four points 
   $(x,a)=(-\pi,\pi/2)=(+\pi,\pi/2)$, $(-\pi/2,\pi/2)$, $(0,\pi/2)$,
   $ (\pi/2,\pi/2)$
   and $(y,a)=(\pm\pi/4,\pi/2), (\pm 3\pi/4,\pi/2)$
   are excluded.}
   \label{figFbyXorY}
\end{figure}

The Figure \ref{figFbyXorY} shows the $f=0$ contours.
Note that the four points with $a=\pi/2$,
namely 
$(x,a)=(\pm \pi,\pi/2)$, $(-\pi/2,\pi/2)$, $(0,\pi/2)$, and $(\pi/2,\pi/2)$
are excluded,
because at $a=\pi/2$
\begin{equation}
f=\frac{2\sin x}{|\cos x|}-\frac{2\cos x}{|\sin x|}
\end{equation}
which has only two zeros at $x=\pi/4$ and $\pi/4-\pi$.

In the Figure \ref{figFbyXorY}, the straight lines represent the isosceles rigid rotator.
There are four lines,
$x=2a \mod (2\pi)$, $a/2$, $a/2-\pi$ and $-a$.
The reader can check that the number for isosceles triangles for given $a=\theta_2-\theta_1$
is just four.
This means that almost all isosceles shape is a rigid rotator,
and that the curve for $0<a<a_c=1.8124...$ represents scalene triangle.
See Figure \ref{figSixRigidRotatorsForAequalPiDiv6}.
\begin{figure}
   \centering
   \includegraphics[width=5cm]{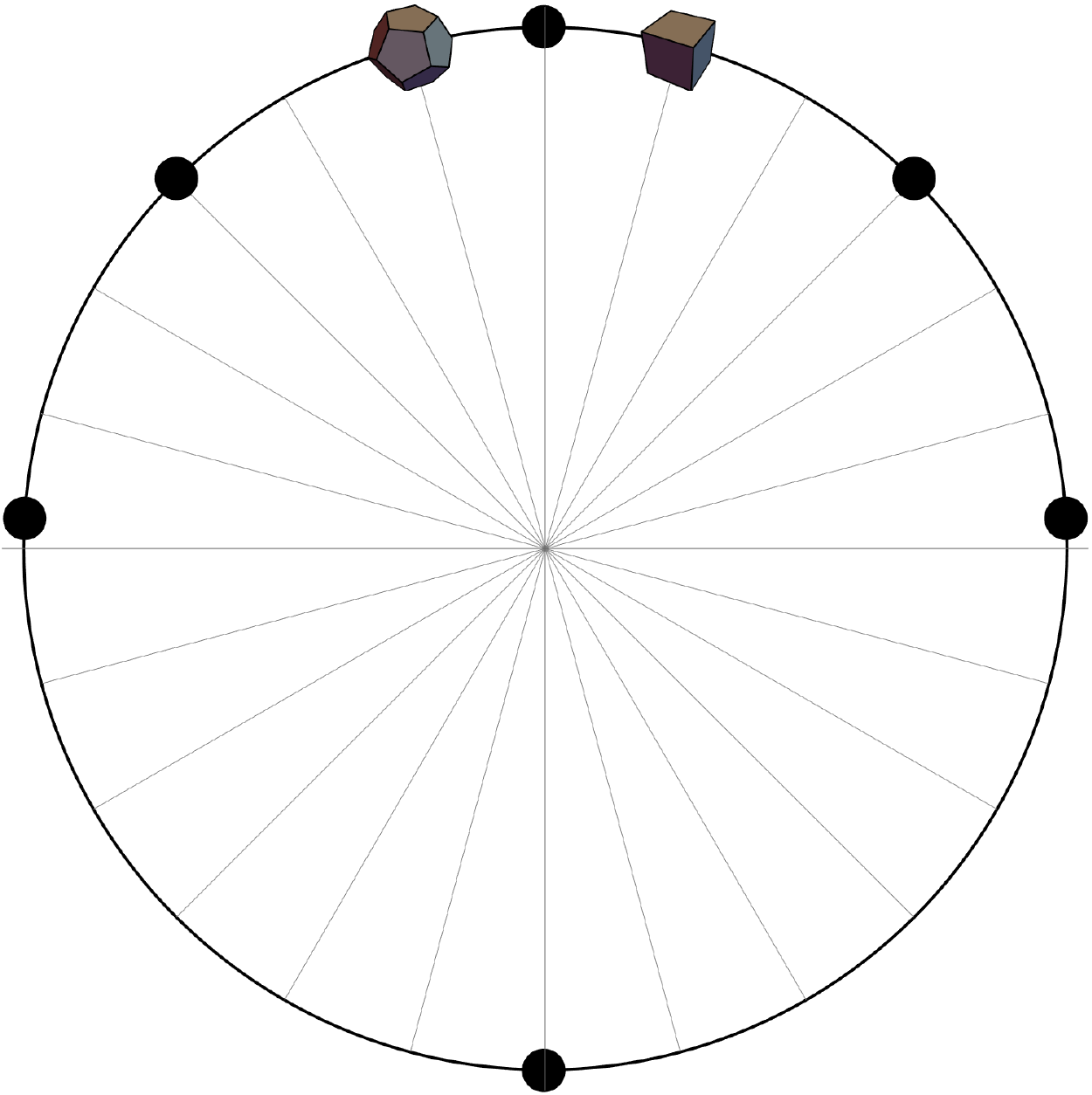} 
   \caption{Six rigid rotators (shapes that can satisfy the equations of motion)
   for $a=\theta_2-\theta_1=\pi/6$ in one picture.
   Only the angles between the masses have meaning.
   The grey straight lines are drawn every $\pi/12$.
   Black balls near the horizontal line represent scalene triangles.}
   \label{figSixRigidRotatorsForAequalPiDiv6}
\end{figure}

The scalene triangle will be treated in the Section \ref{secScalane},
and the isosceles triangle will be tackled in Section \ref{secIsosceles}.

Before closing this section, let us continue to observe the Figure \ref{figFbyXorY}.
The crossing point of the three straight lines is $(a,x)=(2\pi/3,-2\pi/3)$,
which represents the equilateral triangle. 
This shape is the fixed point.

The function $f$ and $g$ are invariant under the exchange of 
$a$ and $x$,
although this may not be so obvious from the equation (\ref{defOfFandG})
and  figure \ref{figFbyXorY}.
The invariance is because, the exchange of $a$ and $x$ is merely
the exchange of the places of $m_2$ and $m_3$,
and this exchange has no effect for the mass distribution and equations of motion.
Similarly, $f$ and $g$ are invariant under the map
$a\to -a$ and $x \to x-a$ which corresponds to 
the exchange of $m_1$ and $m_2$.

The right diagram of Figure \ref{figFbyXorY}
represents the contour of $f=0$ by the variables $a$ (vertical)
and $y=x-a/2$ (horizontal).
This diagram clearly shows the invariance of $f=0$ for $y \leftrightarrow -y$,
which corresponds to the exchange of $m_1$ and $m_2$
with $\pi$ rotation around the $z$-axis.
See Figure \ref{figSixRigidRotatorsForAequalPiDiv6}.
This symmetry will be used to investigate the scalene triangles in the next section.


\section{Scalene relative equilibria}
\label{secScalane}
To investigate the scalene relative equilibria
it is enough to look for the curve in Figure \ref{figFbyXorY}
defined on the region $0<x<a$ and equivalently $-a/2<y=x-a/2<a/2$.
Obviously, the edge of this region is $(x,a)=(0,\pi/2)$ and $(\pi/2,\pi/2)$.
In this region $a$ is the largest arc angle,
because $0<x=\theta_{31}<a=\theta_{21}(<\pi/2)$ and 
$0<\theta_{23}=a-x<a$.

The scalene triangle in the other region is the same shape
with different mass name by the invariance for $a \leftrightarrow x$
and $y \leftrightarrow -y$.

The function $g$ in this region is,
\begin{equation}
\begin{split}
g
=&2 \cos (x) \sin ^2(a-x) \sin (2 a-x) \Big(\sin ^2(a)+\sin ^2(x)\Big)\\
	&-\sin ^2(x) \Big(\sin ^2(a-x)+\sin ^2(a)\Big) \Big(\sin (2 a)+\sin (2 x)\Big)\\
=&\frac{\sin(2y)}{4}h,
\end{split}
\end{equation}
with
\begin{equation}
\begin{split}
h=&
	-\cos(a)\cos(4y)
	+2\Big(2\cos(2a)+\sin^2(2a)\Big)\cos(2y)\\
	&-\cos(a)\Big(\cos(4a)-5\cos(2a)+7\Big).
\end{split}
\end{equation}

Note that we factor out $\sin(2y)$ which corresponds to $y=0$
vertical straight line.
Therefore, we expect that $h=0$ represents the curve
in the Figure \ref{figFbyXorY}
which corresponds to the scalene triangles.
Indeed, we can explicitly show that $h=0$ defines just one curve in the
$(x,a)$ plane.
Note that $h=0$ is the quadratic equation for $\cos(2y)$.
The solution is
\begin{equation}
\label{solOfCos2y}
\cos(2y)
=\cos(a)+\sin(a)\tan(a)\left(
	\cos(2a)
	+\sqrt{\cos^2(2a)-4\cos(2a)-4}
	\right).
\end{equation}
Since the absolute value of another branch is greater than $1$ for $\cos(2y)$,
the solution is unique.
Obviously, the equation (\ref{solOfCos2y}) express a single curve in $(x,a)$ plane.
Therefore $h=0$ represents the curve in the Figure \ref{figFbyXorY}.

Now, let us find the crossing point of the curve $h=0$ and the line $y=0$.
The value of  $a$ at this point is the solution of
\begin{equation}
h(y=0,a)
=-8\sin^4(a/2)\Big(
	\cos(3a)+6\cos(2a)+14\cos(a)+8
	\Big)
=0.
\end{equation}
The solution $a_c\ne 0$ is
\begin{equation}
\begin{split}
\cos(a_c)
&=-1+\frac{1}{2}\left(
				\left(1+\frac{\sqrt{78}}{9}\right)^{1/3}
				+
				\left(1-\frac{\sqrt{78}}{9}\right)^{1/3}
				\right)\\
&=-0.23931...,
\end{split}
\end{equation}
and
\begin{equation}
a_c=1.8124....
\end{equation}
which is slightly smaller than 
$7\pi/12=1.8325...$.
The corresponding $x_c$ is
\begin{equation}
x_c=a_c/2=0.90622....
\end{equation}
This is the crossing point of the curve and the straight line $a=2x$.

The curve $h=0$ is apparently convex, although we don't give a proof.
Assuming the convexity, $a_c$ is the maximum value of $a$ on the curve $h=0$.
Since $a$ is the largest angle $a_\ell$  in this region,
the scalene relative equilibria exists for $\pi/2<a_\ell < a_c$.
Therefore, the scalene relative equilibria cannot have continuation to
the Euclidean plane. 
In \cite{Bengochea} the authors proved that any relative equilibria on the plane can be extended to spaces of constant curvature $\kappa$ when the parameter $\kappa$ is small. The above result shows that the inverse is not true, the dynamics on the sphere is much richer that on Euclidean spaces.

By the symmetry,
the crossing point of the curve and the straight line $a=x/2$ is
$(a,x)=(a_c/2,a_c)$.

Since $\cos(2y)$ is explicitly expressed by the function of $a$
as in (\ref{solOfCos2y}),
 we can exactly determine the scalene rigid rotator and the corresponding relative equilibrium
for given $a \in (\pi/2, a_c)$.
The exact values for $\cos(a)=\cos(\theta_2-\theta_1)=-1/8$ are shown
in the appendix \ref{secExactValues}.

\begin{figure}
   \centering
   \includegraphics[width=12cm]{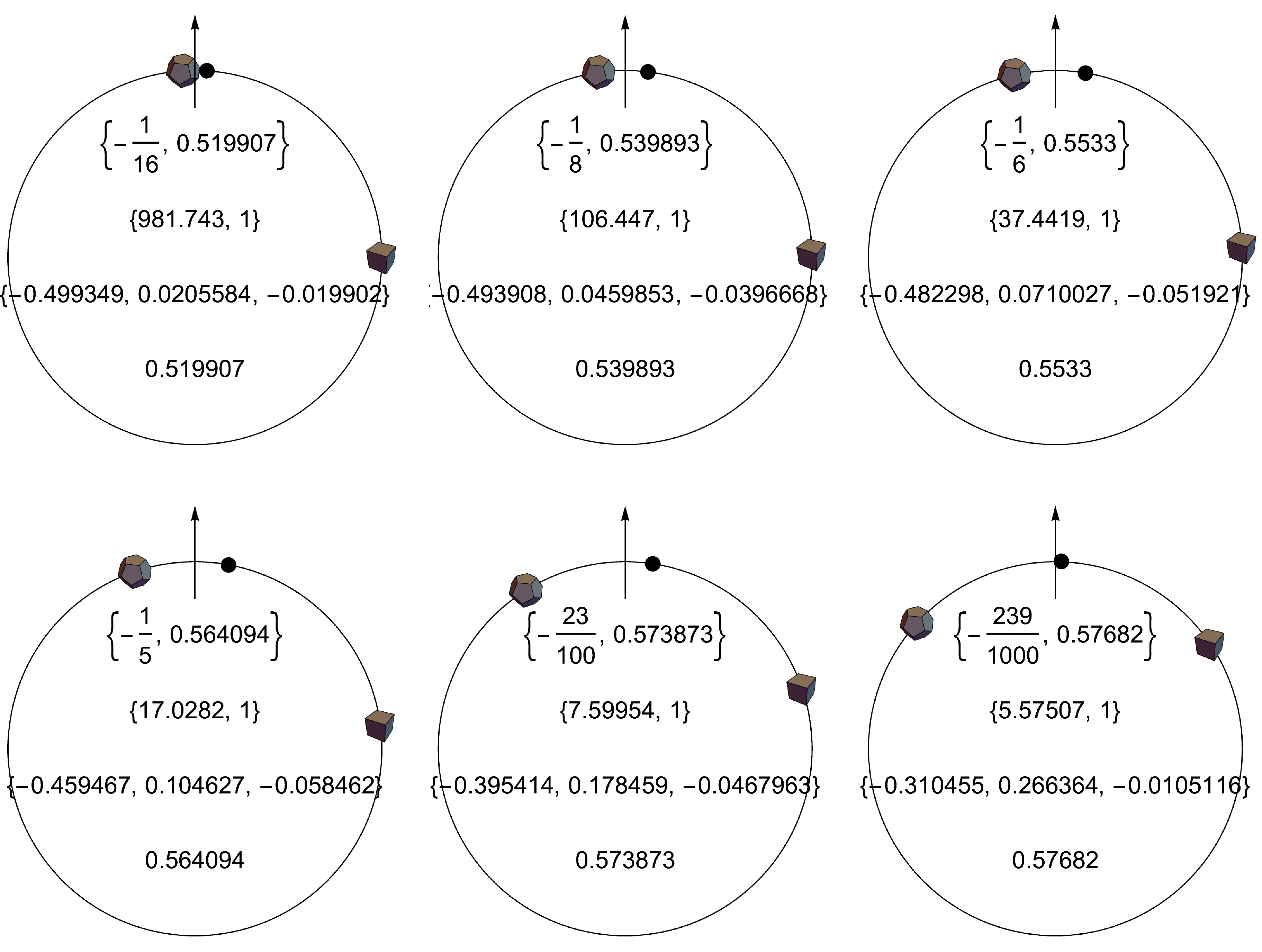} 
   \caption{
 Scalene relative equilibria.
   From top left to bottom right,
   $\cos\theta_{21}=-1/16$, $-1/8$, $-1/6$, $-1/5$, $-23/100$,
   and $-239/100$ with $y>0$.
   Each line in the picture represent
   $\{\cos \theta_{21}, \theta_{21}/\pi\}$,
   $\{\omega^2,s\}$,
   $\{\theta_1/\pi,\theta_2/\pi,\theta_3/\pi\}$,
   and $a_\ell/\pi$.
   For this pictures, $a_\ell=\theta_{21}$.
   The limit $\cos a_\ell \to 0$
   corresponds to the collision of $m_2$ and $m_3$.
 On the other hand,
 the limit $\cos a_\ell  \to \cos a_c=-0.23931...$
 corresponds  to the isosceles relative equilibrium with 
 $\theta_3=0, -\theta_1=\theta_2=a_c/2$.
 }
   \label{figScaleneRE}
\end{figure}

The scalene relative equilibria for $\pi/2<a_\ell<a_c$ and $y>0$ are shown
in Figure \ref{figScaleneRE}.
The limit for $a_\ell \to \pi/2$ is the collision of $m_2$ and $m_3$.
On the other hand, the limit for $a_\ell \to a_c$ is the isosceles 
relative  equilibrium with $\theta_3=0$ and $-\theta_1=\theta_2=a_c/2$.
The other scalene relative equilibria on the curve in the Figure \ref{figFbyXorY}
are generated by the exchange of masses.
(The $\pi$ rotation always occurs over time.)

\begin{remark}
Since we are working with the cotangent potential \eqref{cotangent pot}, which is the potential that is generally used for the study of the curved $3$ body problem,
when the curvature is positive, the previous discussion shows the existence of scalene Eulerian relative equilibria for the equal masses case. As far as we know this is the first time that scalene triangles are shown in this case.
\end{remark}


\section{Isosceles relative equilibria}
\label{secIsosceles}
In Section \ref{conditionForRotator},
it is shown (graphically) that four isosceles $RE$ exist for given $a=\theta_2-\theta_1$.
Moreover,
almost every
isosceles shape is a rigid rotator.

To express an isosceles triangle, we take $m_3$ 
at the mid point between 
$m_1$ and $m_2$
with
\begin{equation}
\theta=\theta_2-\theta_3=\theta_3-\theta_1,
\,\,\, 0<\theta<\pi.
\end{equation}

In this section, the following 
result will be proved.

\begin{theorem}[Isosceles configurations]
\label{propIsosceles}
Any isosceles triangle
 except $\theta=\pi/2$ is a rigid rotator. For the corresponding relative equilibria we have
\begin{itemize}
\item[i)] $\theta_3=0 \mod(\pi)$ for $\theta\in (0,2\pi/3) 
\setminus
\{\pi/2\},$
\item[ii)] $\theta_3=\pi/2 \mod(\pi)$ for $2\pi/3<\theta<\pi,$
\item[iii)]  $\theta_3$ is arbitrary for $\theta=2\pi/3.$
\end{itemize}
\end{theorem}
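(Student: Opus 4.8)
The plan is to substitute the isosceles ansatz $\theta_{21}=2\theta$, $\theta_{31}=\theta$ (so $\theta_{23}=-\theta$, $\theta_{13}=-\theta$) into the machinery of Sections~\ref{secTranslation}--\ref{conditionForRotator} and then read off the position of $m_3$. First I would verify that $A\neq 0$ for an isosceles shape unless $\theta=\pi/3$ or $\theta=2\pi/3$: since $A^2 = 3 + 2(\cos(2\theta_{12})+\cos(2\theta_{23})+\cos(2\theta_{31})) = 3 + 2(\cos 4\theta + 2\cos 2\theta)$, the vanishing of $A$ forces exactly the two special angles already analysed in Section~\ref{secTranslation}. For $\theta=2\pi/3$ the shape is equilateral (item iii), already handled: $\omega=0$ and $\theta_3$ is undetermined. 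For $\theta=\pi/3$ one falls into the degenerate case treated explicitly after equation~(\ref{tranlationFormula2}), which pins down $\theta_3$ consistently with item~i. So the bulk of the argument assumes $\theta\in(0,\pi)\setminus\{\pi/3,2\pi/3\}$ and $A\neq 0$.

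Next, with $A\neq 0$, I would use Theorem~\ref{propConditionForShape} to confirm that every such isosceles shape is a rigid rotator. Here symmetry does most of the work: under the exchange $m_1\leftrightarrow m_2$ (the map $a\to -a$, $x\to x-a$, noted below Figure~\ref{figFbyXorY}) the isosceles shape is fixed, and this symmetry acting on the pair of independent conditions in~(\ref{theConditions}) collapses them so that the remaining single condition holds automatically — equivalently, one checks directly that $G_{13}=G_{23}$ and $F_{13}=F_{23}$, so the second row of the determinant $f$ is proportional to the first and $f=0$ identically. The excluded value $\theta=\pi/2$ is exactly where $\theta_{21}=\pi$, i.e.\ $m_1$ and $m_2$ are antipodal and the potential (and hence $F_{12}$) blows up; this must be flagged as the genuine exception.

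Having established that the shape is a rigid rotator, the translation formula~(\ref{tranlationFormula2}) gives $\sin(2\theta_3)=sA^{-1}(\sin 2\theta_{31}+\sin 2\theta_{32}) = sA^{-1}(\sin 2\theta - \sin 2\theta)=0$ — wait, more carefully, $\theta_{32}=\theta_3-\theta_2=-\theta$ while $\theta_{31}=\theta$, so indeed $\sin 2\theta_{31}+\sin 2\theta_{32}=0$ and $\sin(2\theta_3)=0$, forcing $\theta_3\equiv 0$ or $\pi/2\pmod{\pi}$ in every case. To decide which, I would bring in the equation of motion~(\ref{equationsOfMotion}) for $k=3$: the right-hand side is $\frac{\sin(-\theta)}{|\sin\theta|^3}+\frac{\sin(-\theta)}{|\sin\theta|^3}$ wait no — for $k=3$ the two terms are $F$ with $\theta_{3}-\theta_1=\theta$ and $\theta_3-\theta_2=-\theta$, which cancel, giving $0=\frac{\omega^2}{2}\sin(2\theta_3)$, consistent but uninformative; so instead use $k=1$ (or $k=2$), whose right-hand side is $\frac{\sin 2\theta}{|\sin 2\theta|^3}+\frac{\sin\theta}{|\sin\theta|^3}$, a nonzero quantity whose sign flips as $2\theta$ crosses $\pi$, i.e.\ at $\theta=\pi/2$ — but the branch relevant to the pole-vs-equator dichotomy is the value at $\theta=2\pi/3$. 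Concretely, I would compute $\cos(2\theta_3)$ from~(\ref{tranlationFormula2}): $\cos(2\theta_3)=sA^{-1}(\cos 2\theta+\cos 2\theta+1)=sA^{-1}(2\cos 2\theta+1)$, whose sign is that of $s(2\cos2\theta+1)$, and $2\cos2\theta+1$ changes sign precisely at $\theta=\pi/3$ and $\theta=2\pi/3$. Combining the sign of $\cos(2\theta_3)$ with the forced value $\sin(2\theta_3)=0$: $\cos(2\theta_3)=\pm 1$ gives $\theta_3\equiv 0$, while $\cos(2\theta_3)$ of a fixed sign is incompatible unless... the cleaner route is: since $\sin 2\theta_3=0$ we have $2\theta_3\equiv 0$ or $\pi$, i.e.\ $\cos 2\theta_3=\pm1$; plugging into~(\ref{tranlationFormula2}) gives $sA^{-1}(2\cos2\theta+1)=\pm1$, and then the equation of motion for $k=1$ fixes the sign of $s\omega^2$, and tracking this through the interval $(0,2\pi/3)$ versus $(2\pi/3,\pi)$ shows $\cos2\theta_3=1$ (so $\theta_3\equiv0$) on the first and $\cos2\theta_3=-1$ (so $\theta_3\equiv\pi/2$) on the second.

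\textbf{Main obstacle.} The delicate part is the sign bookkeeping in the last step: one must simultaneously keep track of the sign of $\sin 2\theta$ (which flips at $\theta=\pi/2$), the sign of $2\cos2\theta+1$ (which flips at $\theta=\pi/3$ and $2\pi/3$), the free sign $s=\pm1$, and the requirement $\omega^2>0$, and show that the only globally consistent assignment yields $\theta_3\equiv0$ on $(0,2\pi/3)\setminus\{\pi/2\}$ and $\theta_3\equiv\pi/2$ on $(2\pi/3,\pi)$. I expect the cleanest write-up to fix $s$ by demanding $\omega^2>0$ in the $k=1$ equation, then substitute the two candidate values $\theta_3=0$ and $\theta_3=\pi/2$ back into~(\ref{equationsOfMotion}) and check positivity of the resulting $\omega^2$; exactly one choice survives on each sub-interval, with $\theta=\pi/2$ excluded because $\theta_{21}=\pi$ makes $F_{12}$ undefined and $\theta=\pi/3,2\pi/3$ handled separately as above.
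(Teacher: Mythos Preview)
Your proposal is correct and follows essentially the same route as the paper: separate off the $A=0$ shapes ($\theta=\pi/3,\,2\pi/3$), verify $f=0$ for the remaining isosceles shapes via the symmetry $G_{13}=G_{23}$, $F_{13}=F_{23}$, exclude $\theta=\pi/2$ because $F_{12}$ blows up, and then use the translation formula to get $\sin(2\theta_3)=0$ together with a sign analysis to pin down $\cos(2\theta_3)=\pm1$. The only difference worth noting is in the execution of the sign step you flag as the main obstacle: rather than substituting the two candidate values of $\theta_3$ back into the equations of motion and testing $\omega^2>0$, the paper computes $s\omega^2/(2A)=(F_{12}-F_{23})/(G_{12}-G_{23})$ directly, multiplies through by $A=|1+2\cos 2\theta|$, and observes that the resulting expression $|\sin 2\theta|^{-3}+(\sin^2\theta\,\sin 2\theta)^{-1}$ is positive on $(0,2\pi/3)$ and negative on $(2\pi/3,\pi)$ (it vanishes exactly at $2\pi/3$); this fixes the sign of $s(1+2\cos 2\theta)$ and hence $\cos(2\theta_3)$ in one stroke, without the case-by-case back-substitution you anticipate.
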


\begin{proof}
First we prove that 
almost every
isosceles triangle is a rigid rotator.

For an isosceles triangle, $G_{ij}$ and $F_{ij}$ are
\begin{equation}
\begin{split}
G_{12}&=\sin(4\theta),
\quad G_{23}=G_{31}=-\sin(2\theta),\\
F_{12}&=\frac{\sin(2\theta)}{|\sin(2\theta)|^3},
\quad F_{23}=F_{31}=-\frac{1}{\sin^2(\theta)}.
\end{split}
\end{equation}
Then 
$G_{12}-G_{23}=-(G_{31}-G_{12})
=\sin(4\theta)+\sin(2\theta)
=\sin(2\theta)(1+2\cos(2\theta))$,
and $F_{12}-F_{23}=-(F_{31}-F_{12})=\sin(2\theta)/|\sin(2\theta)|^3+1/\sin^2(\theta)$.
Therefore, the condition $f=0$ is trivially satisfied.
However, $F_{12}$ is infinity for $\theta=\pi/2$,
where the equations of motion for $m_1$ and $m_2$ are not defined.
So, $\theta=\pi/2$ should be excluded.

Now we will determine the corresponding relative equilibrium,
namely, determine $\omega^2$ and $\theta_k$.
We observe that
\begin{equation}
\label{equationForSandOmegaSquare}
\frac{s\omega^2}{2A}
=\frac{F_{12}-F_{23}}{G_{12}-G_{23}}
=\frac{1}{\sin(2\theta)(1+2\cos(2\theta))}
		\left(\frac{\sin(2\theta)}{|\sin(2\theta)|^3}
			+\frac{1}{\sin^2(\theta)}
		\right),
\end{equation}
with 
\begin{equation}
A=\sqrt{3+2\cos(2\theta)+\cos(4\theta)}=|1+2\cos(2\theta)|.
\end{equation}

For $\theta\ne \pi/3, 2\pi/3$, we have that $\cos(2\theta)\ne -1$ and then
\begin{equation}
\frac{s\omega^2}{2}\frac{1+2\cos(2\theta)}{|1+2\cos(2\theta)|}
=\frac{1}{|\sin(2\theta)|^3}+\frac{1}{\sin^2(\theta)\sin(2\theta)}.
\end{equation}
The right hand side is positive for $0<\theta<2\pi/3$,
and negative for $2\pi/3<\theta<\pi$.
Therefore,
\begin{equation}
\label{signFunctionForEqualMassIsosceles}
\frac{s(1+2\cos(2\theta))}{|1+2\cos(2\theta)|}
=
\begin{cases}
+1&\mbox{for }0<\theta<2\pi/3,\\
-1&\mbox{for }2\pi/3<\theta<\pi,
\end{cases}
\end{equation}
and 
\begin{equation}
\label{omegaSquareForEqualMassIsosceles}
\omega^2
=
\begin{cases}
+2\big(1/|\sin(2\theta)|^3
+1/(\sin^2(\theta)\sin(2\theta))\big)
&\mbox{for }0<\theta<2\pi/3,\\
-2\big(1/|\sin(2\theta)|^3
+1/(\sin^2(\theta)\sin(2\theta))\big)
&\mbox{for }2\pi/3<\theta<\pi.
\end{cases}
\end{equation}

By the translation formula (\ref{tranlationFormula2}) and (\ref{signFunctionForEqualMassIsosceles}),
\begin{equation}
\begin{split}
\sin(2\theta_3)&=0 ,\\
\cos(2\theta_3)&=\frac{s(1+2\cos(2\theta))}{|(1+2\cos(2\theta))|}
=
\begin{cases}
+1&\mbox{for }0<\theta<2\pi/3,\\
-1&\mbox{for }2\pi/3<\theta<\pi.
\end{cases}
\end{split}
\end{equation}

Namely,
\begin{equation}
\theta_3=
\begin{cases}
0 \mod (\pi)&\mbox{for }0<\theta<2\pi/3,\\
\pi/2 \mod (\pi) &\mbox{for }2\pi/3<\theta<\pi.
\end{cases}
\end{equation}

For $\theta=\pi/3$ or $2\pi/3$ ($\cos(2\theta)=-1$),
as we have discussed in the last paragraph of Section
\ref{secTranslation},
the equations of motion determine $\omega^2$ and $\theta_k$.
The result is that
$\omega^2=16/(3\sqrt{3})$, $\theta_3=0$ and $\theta_2=-\theta_1=\pi/3$ for $\theta=\pi/3$,
and
$\omega^2=0$ and $\theta_k$ are undetermined for $\theta=2\pi/3$.

Thus, we finally get
\begin{equation}
\label{finalResultForEqualMassIsosceles}
\begin{split}
&(\omega^2,\theta_3)\\
&=
\begin{cases}
+2\big(1/|\sin(2\theta)|^3
+1/(\sin^2(\theta)\sin(2\theta)),0 \!\!\!\mod \!\pi\big)\\
\hfill\mbox{for }\theta\in (0,2\pi/3) \setminus \{\pi/2\},\\
(0, \mbox{ undetermined})\hfill\mbox{for }\theta=2\pi/3,\\
-2\big(1/|\sin(2\theta)|^3
+1/(\sin^2(\theta)\sin(2\theta)),\pi/2\!\!\!\mod\!\pi\big)\\
\hfill\mbox{for }\theta\in (2\pi/3,\pi).
\end{cases}
\end{split}
\end{equation}
\end{proof}

We can easily check this result by direct calculations of the equations of motion
with $\theta_3=0$ or $\pi/2$ and $\theta=\theta_2-\theta_3=\theta_3-\theta_1$.

\section{Final remarks}
\label{secFinalRemarks}
Theorem \ref{propConditionForShape}
holds true for general masses, and generic potential $U(\cos\theta_{ij})$
in the same form,
with
\begin{equation}
\begin{split}
G_{ij}&=m_im_j\sin(2\theta_{ji}),\\
F_{ij}&=-m_im_j\sin(\theta_{ji})U'(\cos\theta_{ij}),
\end{split}
\end{equation}
and
\begin{equation}
U'(\cos\theta)=\frac{dU(\cos\theta)}{d\cos\theta}.
\end{equation}

The method described in this paper  is applicable to the system with repulsive force
by just replacing $U \to -U$ and $F_{ij}\to -F_{ij}$.
As the previous result, the same shape is the rigid rotator
with the same $\omega^2$ and $s \to -s$.
Therefore the corresponding relative equilibrium
has the angle $\theta_k \to \theta_k+\pi/2 \mod (\pi)$.

For the repulsive cotangent potential for equal masses case,
Theorem \ref{propIsosceles} holds just exchanging 
$\theta_3=0 \mod(\pi)$ and $\theta_3=\pi/2 \mod(\pi)$.

The method is also applicable to the 
three charged particles
with 
\begin{equation}
U=-\frac{e_ie_j\cos\theta_{ij}}{\sqrt{1-\cos^2\theta_{ij}}},
\end{equation}
where $e_k$ is the charge of the particle $k=1,2,3$.
For this case,
\begin{equation}
\begin{split}
G_{ij}&=m_im_j\sin(2\theta_{ji}),\\
F_{ij}&=-e_ie_j\frac{\sin(\theta_{ji})}{|\sin(\theta_{ji})|^3}.
\end{split}
\end{equation}
Therefore, for the classical three similar particles problem on $\mathbb{S}^2$, it
has the same relative equilibria
described in this paper
with total $90$ degree rotation.


\appendix
\section{Exact values for the scalene equilibrium \\ for
$\cos(\theta_{21})=-1/8$ and $y>0$}
\label{secExactValues}
For $\cos(a)=-1/8$,
$h=(512 \cos(4y)-15368 \cos(2y)+6513)/2048=0$.
The solution is
\begin{equation}
\cos(2y)=\frac{1921-441\sqrt{17}}{256} >0.
\end{equation}

Then, $\sin(a)>0$ and $\cos(2a)$, $\sin(2a)$ are
\begin{equation}
\begin{split}
&\cos\theta_{21}=\cos a=-\frac{1}{8},
\,\,\,\sin \theta_{21}=\sin a=\frac{3\sqrt{7}}{8}=0.9921...,\\
&\cos(2\theta_{21})=\cos(2a)
=-\frac{31}{32}
=-0.9687....\\
&\sin(2\theta_{21})=\sin(2a)
=-\frac{3\sqrt{7}}{32}
=-0.2480....
\end{split}
\end{equation}
And,
$\cos(a/2)$, $\sin(a/2)$ are
\begin{equation}
\cos(a/2)
=\frac{\sqrt{7}}{4},
\,\,\,\sin(a/2)=\frac{3}{4}.
\end{equation}

The value of $\cos y>0$, $\sin y>0$ are
\begin{equation}
\begin{split}
\cos y
=\frac{1}{16}\sqrt{\frac{7(311-63\sqrt{17})}{2}},
\,\,\,
\sin y=\frac{3}{16}\sqrt{\frac{-185+49\sqrt{17}}{2}}.
\end{split}
\end{equation}

Since $x=y+a/2$,
\begin{equation}
\begin{split}
\cos\theta_{31}=\cos x
&=\frac{1}{128}\Big(
	7\sqrt{622-126\sqrt{17}}
	-9\sqrt{-370+98\sqrt{17}}
	\Big)
	=0.1432...,\\
\sin \theta_{31}=\sin x
&=\frac{3\sqrt{7}}{128}\Big(
	\sqrt{622-126\sqrt{17}}
	+\sqrt{-370+98\sqrt{17}}
	\Big)
	=0.9896...
\end{split}
\end{equation}
Then, for $2\theta_{31}=2x$ we get
\begin{equation}
\begin{split}
&\cos(2\theta_{31})
=\frac{441 \sqrt{17}-63 \sqrt{26894 \sqrt{17}-110014}-1921}{2048}
=-0.9589...\\
&\sin(2\theta_{31})
=-\frac{3 \sqrt{7} \left(441 \sqrt{17}+\sqrt{26894 \sqrt{17}-110014}-1921\right)}{2048}
=0.2835...
\end{split}
\end{equation}

For $\theta_{32}=\theta_{31}-\theta_{21}$, we have
\begin{equation}
\begin{split}
&\cos(\theta_{32})
=\frac{1}{128} \left(9 \sqrt{98 \sqrt{17}-370}+7 \sqrt{622-126 \sqrt{17}}\right)
=0.9640...\\
&\sin(\theta_{32})
=\frac{-3}{128}\left(\sqrt{4354-882 \sqrt{17}}-\sqrt{686 \sqrt{17}-2590}\right)
=-0.2658...\\
&\cos(2\theta_{32})
=\frac{441 \sqrt{17}+63 \sqrt{26894 \sqrt{17}-110014}-1921}{2048}
=0.8586...,\\
&\sin(2\theta_{32})
=\frac{3 \sqrt{7} \left(441 \sqrt{17}-\sqrt{26894 \sqrt{17}-110014}-1921\right)}{2048}
=-0.5125....
\end{split}
\end{equation}

\begin{equation}
A
=\frac{3}{16} \sqrt{\frac{1}{2} \left(49 \sqrt{17}-153\right)}
=0.9283...
\end{equation}

$F_{ij}$ and $G_{ij}$ are
\begin{equation}
\begin{split}
&F_{12}=\frac{64}{63},\\
&F_{23}=\frac{-8192}{
	63\Big(\sqrt{311-63\sqrt{17}}-\sqrt{-185+49\sqrt{17}}
		\Big)^2
	},\\
&F_{31}=\frac{8192}{
	63\Big(\sqrt{311-63\sqrt{17}}+\sqrt{-185+49\sqrt{17}}
		\Big)^2
	},
\end{split}
\end{equation}
\begin{equation}
\begin{split}
&G_{12}=-\frac{3\sqrt{7}}{32},\\
&G_{23}=\frac{3 \sqrt{7} \left(441 \sqrt{17}-1921
	-\sqrt{26894 \sqrt{17}-110014}\right)}{2048},\\
&G_{31}=\frac{3 \sqrt{7} \left(441 \sqrt{17}-1921
	+\sqrt{26894 \sqrt{17}-110014}\right)}{2048}.
\end{split}
\end{equation}

Then,
\begin{equation}
(G_{12}-G_{23})(G_{31}-G_{12})
=\frac{63 \left(6503 \sqrt{17}-26815\right)}{16384}
\ne 0,
\end{equation}
\begin{equation}
\frac{F_{12}-F_{23}}{G_{12}-G_{23}}
=\frac{F_{31}-F_{12}}{G_{31}-G_{12}}
=\frac{16}{189} \sqrt{55614 \sqrt{17}+\frac{1605122}{7}}
>0.
\end{equation}
Therefore, $s=1$ and
\begin{equation}
\omega^2
=\frac{32}{63} \sqrt{5326 \sqrt{17}+\frac{153714}{7}}
=106.44...
\end{equation}

Since $A$ and $s$ are determined,
we can determine $\sin(2\theta_k)$,
\begin{equation}
\begin{split}
&\sin(2\theta_1)
=\frac{1}{64} \sqrt{\frac{7}{98 \sqrt{17}-306}} \left(441 \sqrt{17}-1857+\sqrt{26894 \sqrt{17}-110014}\right),\\
&\sin(2\theta_2)
=\frac{1}{64} \sqrt{\frac{7}{98 \sqrt{17}-306}} \left(-441 \sqrt{17}+1857+\sqrt{26894 \sqrt{17}-110014}\right),\\
&\sin(2\theta_3)
=-\frac{1}{32} \sqrt{1120-\frac{4361}{\sqrt{17}}}.
\end{split}
\end{equation}

So far, 
we get exact values for $\sin(2\theta_k)$, $\sin(\theta_i-\theta_j)$
and $\omega^2$.
Therefore, we can directly check that
this configuration satisfies the equations of motion (\ref{equationsOfMotion}).

\subsection*{Acknowledgements}
The second author (EPC) has been partially supported 
by Asociaci\'on Mexicana de Cultura A.C. and Conacyt-M\'exico Project A1S10112.

\end{document}